\newtheorem{thm}{Theorem}
\newtheorem*{thm*}{Theorem}
\newtheorem{prop}[thm]{Proposition}
\newtheorem*{prop*}{Proposition}
\newtheorem*{cor*}{Corollary}
\theoremstyle{definition}
\newtheorem*{defn*}{Definition}
\newtheorem{remark}[thm]{Remark}
\newtheorem{question}[thm]{Question}
\newtheorem*{question*}{Question}
\newtheorem*{Pquestion*}{Popa's question}
\newtheorem*{conv*}{Convention}
\def\u{\mathsf 1}
\newcommand{\cstar}{$\mathrm{C}^*$}
\def \tp{\operatorname{tp}}
\def\dotminussym#1#2{%
  \setbox0=\hbox{$\m@th#1-$}%
  \kern.5\wd0%
  \hbox to 0pt{\hss\hbox{$\m@th#1-$}\hss}%
  \raise.6\ht0\hbox to 0pt{\hss$\m@th#1.$\hss}%
  \kern.5\wd0}
\def \R{\mathcal R}
\def \u{\mathcal U}
\begin{document}


\title{Non-embeddable II$_1$ factors resembling the hyperfinite II$_1$ factor}
\author{Isaac Goldbring}
\thanks{The author was partially supported by NSF CAREER grant DMS-1349399.}

\address{Department of Mathematics\\University of California, Irvine, 340 Rowland Hall (Bldg.\# 400),
Irvine, CA 92697-3875}
\email{isaac@math.uci.edu}
\urladdr{http://www.math.uci.edu/~isaac}

\maketitle

\begin{abstract}
We consider various statements that characterize the hyperfinite II$_1$ factors amongst embeddable II$_1$ factors in the non-embeddable situation.  In particular, we show that ``generically'' a II$_1$ factor has the Jung property (which states that every embedding of itself into its ultrapower is unitarily conjugate to the diagonal embedding) if and only if it is self-tracially stable (which says that every such embedding has an approximate lifting).  We prove that the enforceable factor, should it exist, has these equivalent properties.  Our techniques are model-theoretic in nature.  We also show how these techniques can be used to give new proofs that the hyperfinite II$_1$ factor has the aforementioned properties.
\end{abstract}

\section{introduction}

In \cite{mvn}, Murray and von Neumann proved that there exists a unique (up to isomorphism) separable hyperfinite II$_1$ factor.  This unique factor, henceforth denoted by $\R$, plays a crucial role in the theory of finite von Neumann algebras.  By Connes' seminal work in \cite{connes}, we know that $\R$ is also the unique separable II$_1$ factor possessing any of the following properties:  injectivity, semidiscreteness, and amenability.

In this article, our focus will be on some statements that characterize $\R$ amongst the class of separable \textbf{embeddable} II$_1$ factors, where a separable tracial von Neumann algebra is embeddable if it embeds into some (equivalently, any) ultrapower $\R^\u$ of $\R$ with $\u$ a nonprincipal ultrafilter on $\mathbb N$.  For example, as proven by Jung in \cite{jung}, any embedding of $\R$ into $\R^\u$ is unitarily conjugate to the diagonal embedding.  In \cite{agk}, the authors say that a separable II$_1$ factor $M$ has the \textbf{Jung property} if and only if any embedding of $M$ into $M^\u$ is unitarily conjugate to the diagonal embedding.  In \cite{ak} (see also \cite[Theorem 3.1.3]{agk}), the authors show that $\R$ is the unique separable embeddable II$_1$ factor with the Jung property.

In \cite{a}, the author defines a separable tracial von Neumann algebra $M$ to be \textbf{self-tracially stable} if any embedding of $M$ into $M^\u$ has an ``approximate lifting.''  (See the next section for a precise definition.)  It is easy to see that any II$_1$ factor with the Jung property is self-tracially stable (see \cite[Proposition 3.3.14]{agk} for a proof).  It follows that $\R$ is self-tracially stable.  The fact that $\R$ is the unique separable embeddable self-tracially stable II$_1$ factor is the content of \cite[Theorem 2.4]{ak}.

Recall that the \textbf{Connes Embedding Problem} (CEP) asks whether or not every separable\footnote{Separability is not an issue here if one allows ultrafilters over larger index sets.} tracial von Neumann algebra is embeddable.  As announced in the recent landmark paper \cite{mip}, the Connes Embedding Problem has a negative answer.  It thus makes sense to ask whether or not there are separable non-embeddable II$_1$ factors that have the Jung property or are self-tracially stable.  (See \cite[Question 3.3.12]{agk} for an explicit mention of the former question.)  \footnote{We should mention that in \cite[Theorem 3.2.5]{agk} it was shown that $\R$ is the unique separable embeddable II$_1$ factor with the \textbf{generalized Jung property}, meaning that any two embeddings of itself into its ultrapower are conjugate by some (not necessarily inner) automorphism of the ultrapower; \cite[Theorem 3.3.1]{agk} shows that there are non-embeddable factors with this property.}

Our first main result is that ``generically'' these are the same question.  To explain this, recall that a tracial von Neumann algebra $M$ is \textbf{existentially closed} (or e.c. for short) if:  whenever $M$ is contained in the tracial von Neumann algebra $N$, then there is an embedding of $N$ into $M^\u$ that restricts to the diagonal embedding of $M$.\footnote{Again, this definition makes sense for not necessarily separable factors using ultrfilters on larger index sets.  Alternatively, one can give a purely syntactical, model-theoretic, definition which makes it clear that density character is irrelevant.}  The notion of e.c. tracial von Neumann algebras comes from model theory and has proven useful in applications of model theory to operator algebras.  Much is known about the class of e.c. tracial von Neumann algebras:  they must be McDuff II$_1$ factors with only approximately inner automorphisms (see \cite{ecfactor} for more on this class).  The generic separable tracial von Neumann algebra is e.c. in the sense that in a natural Polish topology on the space of separable tracial von Neumann algebras, the e.c. algebras form a comeager set.  The notion of e.c. factor can be relativized to the class of embeddable factors, in which case $\R$ is an e.c. embeddable factor.  \footnote{This follows immediately from the fact that $\R$ has the Jung property, but we will discuss another proof at the end of this paper.}

We show the following:

\begin{thm}
If $M$ is a separable e.c. factor, then any embedding of $M$ into itself is approximately inner. 
\end{thm}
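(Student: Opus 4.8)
The plan is to reduce the statement to finding a single unitary in the ultrapower $M^\u$ that intertwines $\theta$ with the identity, and then to produce that unitary using the existential closedness of $M$. First, note that $\theta$ is (unital and) trace-preserving, either by definition of an embedding of tracial von Neumann algebras, or because $\tau\circ\theta$ is a trace on the factor $M$ and the trace is unique. I claim that $\theta$ is approximately inner if and only if there is a unitary $W\in M^\u$ with $W\theta(x)W^*=x$ for every $x\in M$, where $M$ sits inside $M^\u$ diagonally and $\theta(x)\in M$ is likewise viewed diagonally. For the useful direction, write $W=(w_n)_\u$ with each $w_n$ a unitary of $M$; then for every $F\fin M$ and $\epsilon>0$ the set of indices $n$ with $\max_{x\in F}\|w_n\theta(x)w_n^*-x\|_2<\epsilon$ lies in $\u$, hence is nonempty, and conjugating by $w_n^*$ (an $\|\cdot\|_2$-isometry) shows that $\Ad(w_n^*)$ approximates $\theta$ on $F$. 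The converse is the usual diagonal extraction. So it suffices to produce such a $W$.

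Second, I would manufacture $W$ by an extension argument. Set $P:=\theta(M)$, a subfactor of $M$, and consider the isomorphism $\theta^{-1}\colon P\to M$. Since $M$ is finite, there is a trace-preserving conditional expectation $E_P\colon M\to P$, and $\theta^{-1}$ is trace-preserving (as $\tau(x)=\tau(\theta(x))$ for $x\in M$). One may therefore form the HNN extension $N:=\mathrm{HNN}(M,P,\theta^{-1})$: a tracial von Neumann algebra containing $M$ trace-preservingly and equipped with a stable unitary $w\in N$ satisfying $w\,\theta(x)\,w^*=\theta^{-1}(\theta(x))=x$ for all $x\in M$. (Equivalently, one can build such a pair $(N,w)$ by hand via a suitable amalgamated free product or crossed-product-type completion; all that is used below is that $N\supseteq M$ is tracial and that $w$ implements $\theta^{-1}$ on $P$.)

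Third, I invoke that $M$ is existentially closed. Since $M\subseteq N$, there is an embedding $\rho\colon N\to M^\u$ whose restriction to $M$ is the diagonal embedding. Put $W:=\rho(w)$, a unitary of $M^\u$. For $x\in M$, both $x$ and $\theta(x)$ lie in $M$, so $\rho(x)$ and $\rho(\theta(x))$ are just their diagonal images; applying $\rho$ to the identity $w\theta(x)w^*=x$ yields $W\theta(x)W^*=x$ in $M^\u$. By the first step, $\theta$ is approximately inner, as desired.

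The hard part will be the construction in the second step: one must guarantee that the intertwining unitary can genuinely be realized inside a tracial von Neumann algebra extending $M$. This is exactly where the finiteness of $M$ enters, through the existence of the trace-preserving conditional expectation onto $P$ and the trace-compatibility of $\theta^{-1}$, which together ensure that the HNN extension (or its hands-on substitute) is an honest tracial von Neumann algebra into which $M$ embeds trace-preservingly; once that is in place, the application of existential closedness is purely formal.
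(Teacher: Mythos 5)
Your proposal is correct and follows essentially the same route as the paper: form the HNN extension of $M$ along the (trace-preserving) isomorphism $\theta^{-1}\colon\theta(M)\to M$, use Ueda's result that this extension is a finite tracial von Neumann algebra containing $M$ trace-preservingly with a stable unitary intertwining $\theta$ with the identity, and then apply existential closedness to pull that unitary into $M^\u$ (equivalently, into approximate unitaries in $M$). The only cosmetic difference is that the paper states the argument for embeddings of an arbitrary subalgebra $N\subseteq M$ into $M$ (a strictly stronger theorem), whereas you specialize to $N=M$, which is all the stated theorem requires.
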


From this theorem, it follows fairly quickly that if $M$ is separable, e.c., and self-tracially stable, then $M$ has the Jung property; see Proposition 6 below.

We next turn to two model theoretic characterizations of $\R$ amongst embeddable factors.  We call an e.c.\ (embeddable) factor \textbf{enforceable} if it embeds into all other e.c.\ (embeddable) factors.\footnote{This is not the original definition given in \cite{games}, but is equivalent by the results in Section 6 of that paper.}  Should the enforceable (embeddable) factor exist, it is automatically unique.  In \cite[Theorems 5.1 and 5.2]{games}, it is shown that $\R$ is the enforceable embeddable factor and that the CEP is equivalent to $\R$ being the enforceable factor.  Due to the negative solution of the CEP, we see that $\R$ is not the enforceable factor.  This does not, however, preclude the existence of the enforceable factor.  We view the problem of the existence of the enforceable factor to be one of the central problems in the model theory of II$_1$ factors, for if the enforceable factor exists, then it is a canonical object deserving of further study, whereas any proof that it does not exist yields a stronger refutation of CEP.

In this paper, we prove:

\begin{thm}
If the enforceable factor exists, then it has the Jung property.
\end{thm}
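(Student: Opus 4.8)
The plan is to reduce the statement to self-tracial stability and then to prove that the enforceable factor enjoys that property by exhibiting it as something one of the players can force in the model-theoretic building game. Write $M$ for the enforceable factor; by definition $M$ is a separable e.c.\ factor. By Proposition 6, a separable e.c.\ factor that is self-tracially stable has the Jung property, so it suffices to prove that $M$ is self-tracially stable. Thus the entire content of the theorem is the assertion that the enforceable factor is self-tracially stable, and I would establish this by showing that self-tracial stability is an \emph{enforceable} property: since $M$ satisfies every enforceable property, it would then be self-tracially stable and the Jung property would follow at once.

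The first step is to recast self-tracial stability in a form adapted to the game. A routine diagonalization over the ultrafilter $\u$ shows that an embedding $\iota\colon M\to M^\u$ lifts to a sequence of trace-preserving $*$-homomorphisms $M\to M$ if and only if, for every finite $F\subseteq M$ and every $\epsilon>0$, there is a single trace-preserving $*$-homomorphism $\pi\colon M\to M$ with $\|\iota(a)-\Delta(\pi(a))\|_2<\epsilon$ for all $a\in F$, where $\Delta$ denotes the diagonal embedding. Fixing a countable $\|\cdot\|_2$-dense subset of $M$, self-tracial stability therefore becomes the conjunction, over finite tuples $\bar a$ from that set and over $\epsilon=1/k$, of the local requirement that every copy of $\bar a$ inside $M^\u$ coming from an embedding be approximable within $\epsilon$ by the diagonal image $\Delta(\pi(\bar a))$ of some self-embedding $\pi$ of $M$. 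This presents self-tracial stability as a countable conjunction of local approximation requirements.

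The main step is then to enforce each such requirement, after which the general principle that a countable conjunction of enforceable properties is again enforceable delivers self-tracial stability for $M$. To force a single $(\bar a,1/k)$-requirement, the relevant player uses the infinitely many stages at its disposal to build an abundance of self-homomorphisms into $M$: whenever the opponent has committed to a finite configuration that could extend to a copy of $\bar a$ realized by an embedding, the player responds by adjoining elements witnessing a trace-preserving self-map of $M$ that carries $\bar a$ approximately onto that configuration. The genericity of the enforceable construction is what makes such responses always available, and arranging them at cofinally many stages guarantees that in the compiled factor every embeddable copy of $\bar a$ is, to within $1/k$, the diagonal image of a self-embedding.

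The hard part is exactly this enforcing step: self-tracial stability is an infinitary condition quantifying over all embeddings of the \emph{entire} separable algebra into its ultrapower, and the difficulty is to control these ultrapower-level data by the finitary moves available in the game and to produce an explicit strategy guaranteeing the required abundance of self-homomorphisms. I expect this to be where the genuine work lies; once it is done, Theorem 1 enters only at the very end, through Proposition 6: since self-embeddings of the e.c.\ factor $M$ are automatically approximately inner, the liftings furnished by self-tracial stability are implemented by unitaries of $M^\u$, upgrading the lifting of an arbitrary embedding $\iota$ to a unitary conjugacy with $\Delta$ and yielding the Jung property. The same enforceability lemma, applied within the class of embeddable factors (where the enforceable factor is $\R$), should simultaneously give the promised new proof that $\R$ is self-tracially stable.
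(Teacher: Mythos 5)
Your reduction is the same as the paper's: by Proposition 6 together with Theorem 1 (every separable e.c.\ factor has the weak Jung property), it suffices to show that the enforceable factor $M$ is self-tracially stable, and your reformulation of self-tracial stability as countably many local approximation requirements (via diagonalization over $\u$) is standard and fine. The gap is in the main step, and you have flagged it yourself: you never actually prove that each $(\bar a,1/k)$-requirement is enforceable, and the strategy you sketch cannot work as described. The requirement quantifies over embeddings of the \emph{final} compiled factor $M$ into $M^\u$ and asks for approximation by self-embeddings of $M$; neither of these objects is available at any finite stage of the game. A player cannot ``adjoin elements witnessing a trace-preserving self-map of $M$'' because $M$ is not determined until the play is over---the move you describe is circular. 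Note also that enforceability is not something you can extract from genericity for free: since ``being isomorphic to $M$'' is itself an enforceable property, enforceability of self-tracial stability is \emph{equivalent} to the assertion that $M$ is self-tracially stable, so an appeal to ``the genericity of the enforceable construction'' begs the question.

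The missing idea is the finitary handle that the paper takes from \cite[Section 6]{games}: an e.c.\ factor is the enforceable factor if and only if it is \emph{e-atomic}, meaning the existential types of all finite tuples are isolated. With this, the paper's proof (of the stronger Theorem \ref{generaltheorem2}) is short: since $M$ is e.c.\ it is McDuff, hence singly generated, say by $a$; given an embedding $f:M\to\prod_\u N_i$ into an ultraproduct of e.c.\ factors (in particular $f:M\to M^\u$), write $f(a)=(a_i)_\u$; because $M$ is e.c., $f$ is an existential embedding, so isolation of the existential type of $a$ produces, for each $\epsilon>0$, elements $b_i\in N_i$ with $\|a_i-b_i\|_2<\epsilon$ such that $a\mapsto b_i$ extends to an embedding of $M$ into $N_i$, and these maps assemble into an approximate lifting. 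Isolation of types is exactly the device that converts the ultrapower-level quantification you were struggling with into conditions on formulas that transfer across the ultraproduct; without it (or something equivalent), your enforcement step has no content, and making your route rigorous would essentially amount to reproving the e-atomicity characterization, which is where the real work in \cite{games} lies.
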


It is worth noting that by \cite[Theorem 2.14]{ssa}, if $M$ is a II$_1$ factor with the Jung property and $M$ is \textbf{elementarily equivalent} to $M\otimes M$, then $M\cong \R$.\footnote{Here, two II$_1$ factors are elementarily equivalent if they have the same first-order theory.  By the Keisler-Shelah Theorem, we can equivalently say that they have isomorphic ultrapowers.  The reference \cite{ssa} actually only deals with strongly self-absoring \cstar-algebras, but the proof there implies the result that we mention above.}  Consequently, if the enforceable factor $\mathcal{E}$ exists, then we have that $\mathcal{E}$ is not elementarily equivalent to $\mathcal{E}\otimes \mathcal{E}$.\footnote{By the work in \cite[Remark 5.8]{games}, the failure of CEP already implies that $\mathcal{E}$, should it exist, could not be isomorphic to $\mathcal{E}\otimes \mathcal{E}$.}

Our final result concerns the \textbf{finite forcing companion}.  A \textbf{finitely generic factor} is a particular kind of e.c. II$_1$ factor with the generalized Jung property.  (See \cite[Definition 5.3]{games} for a precise definition.  Alternatively, \cite[Propsition 3.10]{games} presents a more workable version of the notion.)  These factors always exist and any two of them are elementarily equivalent; the common first-order theory of the finitely generic factors is known as the finite forcing companion, denoted $T^f$.  In the embeddable situation, $\R$ is a finitely generic embeddable factor (see \cite[Corollary 3.14]{games}), whence the finite forcing companion is simply the complete theory of $\R$.  Since $\R$ embeds in any model of its theory (due to the axiomatizability of being McDuff), the fact that $\R$ has the generalized Jung property implies that it is the \textbf{prime model} of its theory.\footnote{In general, the prime model of a theory is a model which elementarily embeds into any other model of the theory and, if it exists, it is automatically unique.}  

It is currently unknown whether or not $T^f$ has a prime model.  However, if it does, then it is a non-embeddable factor with the Jung property:

\begin{thm}
If $T^f$ has a prime model $M$, then $M$ has the Jung property.
\end{thm}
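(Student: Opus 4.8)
The plan is to deduce the result from Proposition 6 by showing that a prime model $M$ of $T^f$ is both existentially closed and self-tracially stable; since a prime model is separable, Proposition 6 then applies and yields the Jung property. Thus the work splits into two tasks, the second being the substantial one.

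First I would show that $M$ is e.c. Since the finitely generic factors always exist, are models of $T^f$, and are e.c., and since $M$, being prime, embeds elementarily into every model of $T^f$, I may fix a finitely generic factor $F$ together with an elementary embedding $M \preceq F$. It then suffices to prove the general fact that an elementary submodel of an e.c.\ factor is itself e.c. For this I would invoke the amalgamation property for tracial von Neumann algebras (via amalgamated free products over a common subalgebra): given any extension $M \subseteq N$, amalgamate $N$ and $F$ over $M$ to obtain $P \supseteq F$ into which $N$ embeds over $M$. An existential formula with parameters from $M$ that holds in $N$ then holds in $P$, hence in $F$ (as $F$ is e.c.\ in $P$), hence in $M$ (as $M \preceq F$); so $M$ is e.c.\ in $N$, and as $N$ was arbitrary, $M$ is e.c.

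The heart of the argument is to show that $M$ is self-tracially stable; this is where primeness, rather than mere existential closedness, is essential. Unwinding the definition, it suffices to show that every embedding $\iota \colon M \to M^\u$ can be approximated, on each finite tuple from a fixed generating set and to within each $\e > 0$, by a genuine self-embedding $M \to M$ (one then diagonalizes over $\u$ to obtain the approximate lifting). Fix such a tuple $\bar a$ with type $p = \tp(\bar a)$. Because $M$ is prime it is atomic, so $p$ is principal; and because $M$ is e.c., its image $\iota(M)$ is existentially closed inside $M^\u$, whence $\iota(\bar a)$ has the same existential type in $M^\u$ as $\bar a$ has in $M$. Combining these two facts, $\iota(\bar a)$ (approximately) realizes the principal type $p$ in $M^\u$. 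Writing $\iota(\bar a)$ as an ultralimit of tuples from $M$, for $\u$-almost every coordinate the representing tuple nearly satisfies the isolating condition for $p$, and therefore lies $\e$-close to an exact realization of $p$ in $M$; by the approximate homogeneity of the atomic model $M$, such a realization is the image of $\bar a$ under a self-embedding (indeed an automorphism) of $M$. This produces the desired approximating self-embedding.

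The main obstacle is precisely the step above where isolation of the full type $p$ is matched against the existential-type information supplied by existential closedness: one must verify that, for the atomic generating tuple of an e.c.\ prime model, agreement of existential types forces agreement of complete types, so that $\iota(\bar a)$ genuinely (approximately) realizes $p$. I expect this to require a careful analysis of how isolating formulas for atomic types over $T^f$ can be taken to be controlled by existential formulas in the e.c.\ setting. Once self-tracial stability is in hand, Proposition 6 together with the existential closedness established above immediately gives that $M$ has the Jung property, in close parallel with the proof of Theorem B for the enforceable factor.
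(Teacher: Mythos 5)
Your overall decomposition is the same as the paper's: establish the weak Jung property (via existential closedness) and self-tracial stability, then invoke Proposition 6. Your argument that $M$ is e.c.\ is fine (the amalgamation route works), and the first half is essentially sound. But there is a genuine gap in the second half, and it is exactly the one you flag as ``the main obstacle'': you cannot get from agreement of \emph{existential} types to (approximate) realization of the \emph{complete} isolated type $p$ using e.c.-ness alone. An embedding of an e.c.\ factor into $M^\u$ only preserves existential formulas, whereas the formula $\varphi$ isolating $\tp^M(a)$ in the atomic model $M$ is an arbitrary formula; nothing in the e.c.\ machinery forces $\varphi^{M^\u}(\iota(a))=0$. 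The hoped-for ``careful analysis showing isolating formulas can be taken to be controlled by existential formulas'' is not available: if existential-type agreement implied complete-type agreement in this setting, then (by the same reasoning applied to the diagonal embedding versus $\iota$) every e.c.\ factor would satisfy a form of the Jung property, which is far stronger than what is being proved and is not known.

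The missing input is \emph{finite genericity}, which is strictly stronger than existential closedness and is precisely what the paper uses. Since $M$ embeds elementarily into a finitely generic factor, \cite[Corollary 3.12]{games} gives that $M$ is itself finitely generic (your amalgamation argument extracts only e.c.-ness from this elementary embedding, discarding the stronger conclusion). The key consequence is that any embedding $f\colon M\to M^\u$ of a finitely generic factor into a model of $T^f$ is a fully \emph{elementary} map, not merely an existential one. Elementarity is what lets the isolating formula transfer: $\varphi^{M^\u}(f(a))=\varphi^M(a)=0$, hence $\varphi^M(a_i)<\delta$ for $\u$-almost all coordinates $a_i$ of $f(a)$, and then isolation produces $b_i\in M$ with $\|a_i-b_i\|_2<\epsilon$ such that $a\mapsto b_i$ extends to a self-embedding of $M$, yielding the approximate lift. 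So the correct repair of your proof is to replace ``$M$ is e.c.'' by ``$M$ is finitely generic'' throughout the self-tracial stability argument; with that substitution, your outline becomes the paper's proof.
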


In the final section, we revisit the embeddable situation and give model-theoretic proofs that $\R$ has the Jung property and is self-tracially stable that might be of independent interest.

In order to keep this note fairly short, we will freely use model-theoretic language when necessary.  The reader is advised to consult \cite[Section 2]{agk} for a more thorough introduction.

We would like to thank Scott Atkinson and Srivatsav Kunnawalkam Eyavalli for helpful discussions in preparing this paper.

\section{Proofs of theorems}

We first prove Theorem 1.  In fact, the following yields an even stronger result:

\begin{thm}
Suppose that $M$ is an e.c.\ factor with subalgebra $N$.  Then any embedding of $N$ in $M$ is approximately unitarily conjugate to the inclusion map.
\end{thm}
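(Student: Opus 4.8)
The conclusion to be proved is that for every finite $F \subseteq N$ and every $\varepsilon > 0$ there is a unitary $u \in M$ with $\|u\phi(x)u^{*} - x\|_{2} < \varepsilon$ for all $x \in F$. The plan is to produce in one stroke a single unitary $U$ in the ultrapower $M^{\u}$ satisfying $U\phi(x)U^{*} = x$ for \emph{all} $x \in N$; since $F$ is finite, lifting $U$ to a representing sequence of unitaries of $M$ will then immediately yield the desired approximate conjugacies in $M$ itself. Throughout I use the diagonal embedding $M \hookrightarrow M^{\u}$, under which the elements $x$ and $\phi(x)$ (both lying in $M$) are sent to their constant sequences.

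The device that produces $U$ is existential closedness. By its very definition it is enough to exhibit \emph{some} tracial von Neumann algebra $P \supseteq M$, with trace-preserving inclusion, that contains a unitary $v$ with $v\phi(x)v^{*} = x$ for every $x \in N$: for then e.c.\ supplies an embedding $P \hookrightarrow M^{\u}$ restricting to the diagonal embedding of $M$, and because $x,\phi(x) \in M$, the image $U$ of $v$ satisfies $U\phi(x)U^{*} = x$ in $M^{\u}$, exactly as wanted. Thus the model-theoretic content of the theorem is carried entirely by this one application of e.c., and what remains is the construction of $P$.

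The two maps $\iota$ (the inclusion) and $\phi$ exhibit $N$ as the two subalgebras $N$ and $\phi(N)$ of $M$, related by the trace-preserving isomorphism $\phi \colon N \to \phi(N)$. To build $P$ I would take the von Neumann-algebraic HNN extension $P = \mathrm{HNN}(M, N, \phi)$, whose stable unitary $w$ satisfies $w\,x\,w^{*} = \phi(x)$ for all $x \in N$; the unitary $v := w^{*}$ then satisfies $v\phi(x)v^{*} = x$, as required. The relevant input is that such an HNN extension of a tracial von Neumann algebra exists as a tracial von Neumann algebra whose trace is faithful and restricts to the given trace on $M$; this is standard (it is constructed by Ueda, and can be realized concretely inside an amalgamated free product of von Neumann algebras).

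With $P$ and $v$ in hand the argument closes quickly: applying e.c.\ as above gives $U \in M^{\u}$ with $U\phi(x)U^{*} = x$ for all $x \in N$, and since unitaries of $M^{\u}$ lift to sequences of unitaries of $M$ we may write $U = (u_{n})_{\u}$ with each $u_{n}$ a unitary of $M$. For the fixed finite set $F$ and the given $\varepsilon$, the set of indices $n$ for which $\max_{x \in F}\|u_{n}\phi(x)u_{n}^{*} - x\|_{2} < \varepsilon$ lies in $\u$ and is in particular nonempty, so any such $u_{n}$ is the sought unitary. I expect the only genuinely non-routine point to be the construction of $P$: one must adjoin the stable unitary implementing $\phi$ while preserving faithfulness of the trace and its restriction to $M$. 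This is the single place where operator-algebraic input, rather than model theory, is needed; everything else is bookkeeping around the definition of e.c.
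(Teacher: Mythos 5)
Your proposal is correct and follows essentially the same route as the paper: both construct Ueda's tracial HNN extension $P \supseteq M$ containing a unitary conjugating the embedding to the inclusion, and then invoke existential closedness of $M$ to pull that unitary down approximately into $M$. Your write-up merely makes explicit the mechanism behind the e.c.\ step (embedding $P$ into $M^{\u}$ over $M$ and lifting the unitary to a representing sequence), which the paper leaves implicit.
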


\begin{proof}
Let $f:N\to M$ be an embedding.  Let $P$ be the HNN extension obtained from $M$ and $f$; we note that $P$ is finite and there is a trace on $P$ such that the inclusion $M\subseteq P$ is trace-preserving (see \cite[Corollary 4.2]{ueda}).  In particular, there is a unitary $u\in P$ such that $uf(x)u^*=x$ for all $x\in N$.  Since $M$ is e.c., this implies that for any finite $F\subseteq N$ and $\epsilon>0$, there is a unitary $v\in M$ such that $\|vf(x)v^*-x\|_2<\epsilon$ for all $x\in F$, as desired.
\end{proof}

\begin{remark}[For the model theorists]
Theorem 4 implies that, in any e.c.\ II$_1$ factor, the quantifier-free type of a tuple implies its complete type.  It would be interesting to see if one could leverage this fact to gain any further insight into the class of e.c.\ factors.
\end{remark}

In connection with Theorem 1, we say that a II$_1$ factor $M$ has the \textbf{weak Jung property} if every endomorphism of $M$ is approximately inner.


\begin{prop}
A separable II$_1$ factor has the Jung property if and only if it has the weak Jung property and is self-tracially stable.
\end{prop}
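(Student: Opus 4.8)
The plan is to prove the two implications separately, with the reverse implication carrying the real content. For the forward direction, the fact that the Jung property implies self-tracial stability is already recorded in \cite[Proposition 3.3.14]{agk}, so I would only need to check that the Jung property implies the weak Jung property. Given an endomorphism $\phi$ of $M$, I would first observe that $\phi$ preserves the unique trace of $M$ and is therefore $\|\cdot\|_2$-isometric, hence injective; thus $\phi$ is an embedding. Composing it with the diagonal embedding $\Delta\colon M\to M^{\u}$ produces an embedding $\Delta\circ\phi$, to which the Jung property applies, yielding a unitary $u\in M^{\u}$ with $u\,(\Delta\circ\phi)(x)\,u^*=\Delta(x)$ for all $x$. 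Lifting $u$ to a sequence of unitaries $(u_n)$ in $M$ and unwinding this ultrapower identity gives $\lim_{n\to\u}\|u_n\phi(x)u_n^*-x\|_2=0$, which (after conjugating by $u_n^*$) is exactly the assertion that $\phi$ is approximately inner.

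For the reverse direction, let $\iota\colon M\to M^{\u}$ be an arbitrary embedding; the goal is to exhibit a unitary conjugating $\iota$ to $\Delta$. Using self-tracial stability, I would lift $\iota$ to a sequence of $*$-endomorphisms $\phi_n\colon M\to M$ with $\iota(x)=(\phi_n(x))_{\u}$ for every $x$. Since each $\phi_n$ is an endomorphism, the weak Jung property makes each $\phi_n$ approximately inner. The point is then to assemble these separate approximations into a single conjugation inside the ultrapower. Fixing a $\|\cdot\|_2$-dense sequence $(x_k)$ in $M$ (available by separability), for each $n$ I would invoke approximate innerness of $\phi_n$ to select a unitary $v_n\in M$ with $\|v_n\phi_n(x_k)v_n^*-x_k\|_2<1/n$ for all $k\le n$. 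Setting $u=(v_n)_{\u}$, a unitary of $M^{\u}$, this choice forces $\lim_{n\to\u}\|v_n\phi_n(x_k)v_n^*-x_k\|_2=0$ for each fixed $k$ (the index set $\{n:n\ge k\}$ is cofinite and $\lim_{n\to\u}1/n=0$), so that $u\,\iota(x_k)\,u^*=\Delta(x_k)$. Because $\Ad(u)\circ\iota$ and $\Delta$ are both $\|\cdot\|_2$-isometric, a density argument then upgrades this to $u\,\iota(x)\,u^*=\Delta(x)$ for all $x\in M$, which is the Jung property.

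The reverse direction is where I expect the work to concentrate, and within it the diagonalization is the crux: one must interleave the $n$-dependent unitaries witnessing approximate innerness of the various $\phi_n$ into one ultrapower unitary that simultaneously handles all of $M$, and this is precisely the step that consumes the separability hypothesis. The only definitional subtlety I would be careful to settle is that the approximate lifting supplied by self-tracial stability can be taken to consist of genuine $*$-endomorphisms of $M$, since it is this feature that licenses applying the weak Jung property to each $\phi_n$; once that is confirmed, the remainder is routine manipulation of ultrafilter limits.
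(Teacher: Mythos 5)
Your proof is correct and follows essentially the same route as the paper: your forward direction (composing an endomorphism with the diagonal embedding, applying the Jung property, and lifting the conjugating unitary to a sequence of unitaries in $M$) is exactly the paper's argument, while your lift-then-diagonalize proof of the converse is precisely the argument the paper compresses into ``The converse is clear.'' The definitional subtlety you flag is settled by convention: $*$-homomorphisms of tracial von Neumann algebras are unital, and a unital $*$-homomorphism between II$_1$ factors is automatically trace-preserving (by uniqueness of the trace), hence a genuine endomorphism to which the weak Jung property applies.
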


\begin{proof}
First suppose that $M$ has the Jung property and that $f:M\to M$ is an endomorphism.  By viewing $f$ as taking values in $M^\u$, there is a unitary $u\in M^\u$ such that $uf(x)u^*=x$ for all $x\in M$.  In particular, given any finite $F\subseteq M$ and $\epsilon>0$, there is a unitary $v\in M$ such that $\|vf(x)v^*-x\|_2<\epsilon$ for all $x\in F$, whence $f$ is approximately inner.  As mentioned in the introduction, any II$_1$ factor with the Jung property is self-tracially stable. 

The converse is clear.
\end{proof}







Given the fact that $\R$ is the unique separable embeddable factor with either the Jung property or the property of being self-tracially stable, the following question seems natural\footnote{On the other hand, there may be more than one non-embeddable factor with the generalized Jung property; see \cite[Corollary 3.3.5]{agk}.}:

\begin{question}
Must there be at most one non-embeddable factor with the Jung property?  That is self-tracially stable?
\end{question}

We now move on to Theorem 2, which will follow from an alternative characterization of the enforceable factor.  First, recall from \cite{a} that if $\mathfrak C$ is a class of tracial von Neumann algebras, then a tracial von Neumann algebra $M$ is said to be \textbf{$\mathfrak C$-tracially stable} if whenever $f:M\to \prod_\u N_i$ is an embedding with $\u$ a nonprincipal ultrafilter on $\mathbb N$ and each $N_i$ belongs to $\mathfrak C$, then there are *-homomorphisms $f_i:M\to N_i$ such that $f(x)=(f_i(x))_\u$ for all $x\in M$.  (We refer to the sequence $(f_i)_{i\in \mathbb N}$ as an ``approximate lifting'' of $f$.)  In particular, $M$ is self-tracially stable if and only if $M$ is $\{M\}$-tracially stable.

We let $\mathfrak{E}$ denote the class of e.c.\ factors.  The following theorem immediately implies Theorem 2:

\begin{thm}\label{generaltheorem2}
The II$_1$ factor $M$ is the enforceable factor if and only if it is e.c.\ and $\mathfrak{E}$-tracially stable.
\end{thm}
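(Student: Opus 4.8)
The plan is to prove both implications, disposing of the easier ``backward'' direction first and isolating the genuine difficulty in the ``forward'' one.

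For the backward direction, suppose $M$ is e.c.\ and $\mathfrak{E}$-tracially stable, and let $N$ be an arbitrary e.c.\ factor; I must produce an embedding $M\hookrightarrow N$. First I would fix a common tracial von Neumann algebra $Q$ containing both $M$ and $N$ (for instance the tracial free product $M*N$). Since $N$ is e.c.\ and $N\subseteq Q$, the definition of existential closure supplies an embedding $\theta\colon Q\to N^{\u}$ restricting to the diagonal embedding on $N$. Restricting $\theta$ to $M$ gives an embedding $\psi\colon M\to N^{\u}=\prod_{\u}N$, and because $N\in\mathfrak{E}$, $\mathfrak{E}$-tracial stability of $M$ yields $*$-homomorphisms $\psi_i\colon M\to N$ with $\psi=(\psi_i)_{\u}$. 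Here I would invoke the elementary fact that a unital normal $*$-homomorphism between II$_1$ factors is automatically a trace-preserving embedding (its kernel is a $\sigma$-weakly closed ideal of the factor $M$, hence $0$, and the unique trace on $N$ restricts to the unique trace on the subfactor $\psi_i(M)$). Thus each $\psi_i$ embeds $M$ into $N$, and as $N$ was an arbitrary e.c.\ factor while $M$ is itself e.c., $M$ is enforceable.

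For the forward direction, suppose $M$ is the enforceable factor; it is e.c.\ by definition, so it remains to establish $\mathfrak{E}$-tracial stability. Let $f\colon M\to N:=\prod_{\u}N_i$ be an embedding with each $N_i$ e.c. Since $M$ is enforceable it embeds into each $N_i$; fixing embeddings $\iota_i\colon M\to N_i$ produces a diagonal embedding $\iota:=(\iota_i)_{\u}\colon M\to N$, which patently lifts. My strategy is to show that $f$ and $\iota$ are \emph{unitarily} conjugate inside $N$: once I have a unitary $u=(u_i)_{\u}\in N$ with $uf(x)u^{*}=\iota(x)$ for all $x\in M$, I can read off $f(x)=(u_i^{*}\iota_i(x)u_i)_{\u}$, so that the maps $g_i:=\Ad(u_i^{*})\circ\iota_i\colon M\to N_i$ constitute the desired approximate lifting. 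To obtain this unitary I would first argue that $f$ and $\iota$ realize the same complete type in $N$. Fix a tuple $\bar a$ from a countable generating set of $M$, with quantifier-free type $p$; both $f(\bar a)$ and $\iota(\bar a)$ realize $p$ in $N$. The idea is to compare them \emph{coordinatewise}: for $\u$-almost every $i$ the tuple $f(\bar a)_i$ realizes $p$ approximately in $N_i$ while $\iota_i(\bar a)$ realizes it exactly, and since $N_i$ is e.c.\ the Remark following Theorem 4 (quantifier-free type implies complete type in an e.c.\ factor) should force these two tuples to have (approximately) equal complete type in $N_i$; taking the $\u$-limit would give $\tp^{N}(f(\bar a))=\tp^{N}(\iota(\bar a))$. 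With equality of types in hand, I would then use countable saturation of the ultraproduct $N$, together with the approximate innerness available for the e.c.\ factor $M$ from Theorems 1 and 4, to upgrade this into an honest unitary conjugacy of the separable copies $f(M)$ and $\iota(M)$ by a unitary of $N$.

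The main obstacle is precisely the passage to the $\u$-limit in this last step. Theorem 4 guarantees that quantifier-free type determines complete type \emph{within a single e.c.\ factor}, but the ultraproduct $N$ of e.c.\ factors need not itself be e.c.\ (the class of e.c.\ factors is not closed under ultraproducts, since II$_1$ factors admit no model companion), so the Remark cannot be applied to $N$ directly, nor can a conjugating unitary simply be imported from an e.c.\ extension $\widehat{N}\supseteq N$ in which equal types are visible, as such a unitary would live in the wrong algebra. I must instead promote Theorem 4 to a \emph{uniform, approximate} statement over the class $\mathfrak{E}$---that tuples with nearly equal quantifier-free type in an e.c.\ factor have nearly equal values on each fixed formula, with a modulus independent of the factor---so that the coordinatewise comparison survives the ultralimit; alternatively, one exploits that $f(M)$ is itself an e.c.\ subalgebra of $N$, allowing the required witnesses to be found inside each $N_i$ via existential closure. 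A secondary delicate point is ensuring that the conjugacy produced by saturation is genuinely inner (a unitary of $N$) rather than merely an automorphism, which is exactly what makes the coordinates $g_i$ available; this is where the McDuff structure of $M$ and the self-embedding results of Theorems 1 and 4 must be brought to bear.
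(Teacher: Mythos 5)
Your backward direction is correct and is essentially the argument the paper intends (the paper compresses it to one sentence): embed $M$ and $N$ into a common algebra, use that $N$ is e.c.\ to push $M$ into $N^\u$, lift by $\mathfrak{E}$-tracial stability, and note that a unital $*$-homomorphism between II$_1$ factors is automatically a trace-preserving embedding. The problem is the forward direction, where you have correctly located---but not filled---the essential gap, and your proposed route makes the problem harder than it needs to be.

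The missing ingredient is the characterization of the enforceable factor from \cite[Section 6]{games}, which the paper's proof leans on: an e.c.\ factor $M$ is enforceable if and only if it is \emph{e-atomic}, meaning the existential type of every finite tuple is isolated. Isolation is precisely the ``uniform, approximate'' statement you say you need: for each $\epsilon>0$ it supplies a single formula $\varphi$ and $\delta>0$ such that in \emph{any} e.c.\ factor $N$, any tuple $b$ with $\varphi^N(b)<\delta$ lies within $\epsilon$ of a tuple $c$ with $\operatorname{etp}^N(c)=\operatorname{etp}^M(a)$; uniformity over the class $\mathfrak{E}$ is built into the definition, so it survives the ultralimit. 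Granting this, the paper's proof is short and avoids unitary conjugacy entirely: since $M$ is e.c.\ it is McDuff, hence singly generated; fix a generator $a$, write $f(a)=(a_i)_\u$; since $M$ is e.c., $f$ is an existential embedding, so $\varphi^{\prod_\u N_i}(f(a))=0$ and thus $\varphi^{N_i}(a_i)<\delta$ for $\u$-almost all $i$; isolation yields $b_i\in N_i$ with $\|a_i-b_i\|_2<\epsilon$ such that $a\mapsto b_i$ extends to an embedding of $M$ into $N_i$ (equality of existential types implies equality of quantifier-free types, hence an isomorphism onto the subalgebra generated by $b_i$). Letting $\epsilon\to 0$ and diagonalizing gives the approximate lifting coordinatewise. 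Without e-atomicity, your plan cannot be completed: in a general e.c.\ factor there is no reason an \emph{approximate} realization of the quantifier-free type of $a$ is close to an \emph{exact} one, and Theorem 4 and the Remark following it apply only to honest subalgebras, i.e., exact realizations; the uniform statement you would need to promote Theorem 4 to is, in effect, a restatement of e-atomicity, so you would be re-proving the theorem of \cite{games} that is the real content here. Note also that your secondary difficulty (extracting a unitary, rather than an automorphism, from equality of types) simply evaporates in the paper's approach, since the lifts are built directly in each $N_i$ rather than by conjugating a fixed diagonal embedding.
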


In order to prove Theorem \ref{generaltheorem2}, we need to recall a few model-theoretic facts from \cite{games}.  First, if $M$ is an e.c.\ factor and $a$ is a tuple from $M$, the \textbf{existential type of $a$ in $M$}, denoted $\operatorname{etp}^M(a)$, is the collection of existential formulae $\varphi(x)$ such that $\varphi^M(a)=0$.  Such an existential type is called \textbf{isolated} if, given any $\epsilon>0$, there is an existential formula $\varphi(x)$ and $\delta>0$ such that $\varphi^M(a)=0$ and whenever $N$ is an e.c.\ factor with a tuple $b\in N$ such that $\varphi^N(b)<\delta$, then there is $c\in N$ such that $\|b-c\|_2<\epsilon$ and for which $\operatorname{etp}^M(a)=\operatorname{etp}^N(c)$.  The e.c.\ factor $M$ is called \textbf{e-atomic} if the existential types of all finite tuples are isolated.  It is shown in \cite[Section 6]{games} that an e.c.\ factor $M$ is e-atomic if and only if it is the enforceable factor (in which case it is unique).   

\begin{proof}[Proof of Theorem \ref{generaltheorem2}]
First suppose that $M$ is the enforceable factor.  We must show that $M$ is $\mathfrak{E}$-tracially stable.  Towards this end, fix an embedding $f:M\to \prod_\u N_i$ with each $N_i$ e.c.  Since $M$ is e.c., it is McDuff, whence singly generated.  Fix a generator $a$ of $M$ and write $f(a)=(a_i)_\u$.  Fix $\epsilon>0$ and let $\varphi(x)$ and $\delta>0$ be as in the definition of isolated existential type for $a$ and $\epsilon$.  Since $M$ is e.c., $f$ is an existential embedding, meaning that $\varphi^{\prod_\u N_i}(f(a))=0$ and thus $\varphi^{N_i}(a_i)<\delta$ for $\u$-almost all $i$.  For these $i$, there is $b_i\in N_i$ such that $\|a_i-b_i\|_2<\epsilon$ and for which the map $a\mapsto b_i$ extends to an isomorphism between $M$ and the subalgebra of $N_i$ generated by $b_i$.  Thus, $f$ has an approximate lifting.


Conversely, suppose that $M$ is e.c. and $\mathfrak{E}$-tracially stable.  It follows that $M$ embeds into every e.c. II$_1$ factor, whence $M$ is enforceable.
\end{proof}






Finally, we prove Theorem 3.

\begin{proof}[Proof of Theorem 3]
Let $M$ be the prime model of $T^f$.  To show that $M$ has the Jung property, we show that $M$ has the weak Jung property and is self-tracially stable.

Fix a finitely generic factor $N$.  Since $M$ is the prime model of $T^f$, we have that $M$ embeds elementarily in $N$.  Thus, by \cite[Corollary 3.12]{games}, $M$ itself is finitely generic.  In particular, $M$ is e.c. and thus has the weak Jung property.

It remains to show that $M$ is self-tracially stable.  The argument for showing this is similar to that showing that the enforceable factor is self-tracially stable.  Indeed, fix an embedding $f:M\to M^\u$.  This time, given any $a\in M$, the complete type of $a$ in $M$, denoted $\tp^M(a)$, is isolated\footnote{This follows from the fact that prime models of theories are atomic models.}.  Fix a generator $a$ of $M$ and write $f(a)=(a_i)_\u$.  Given $\epsilon>0$, there is some formula $\varphi(x)$ and $\delta>0$ such that $\varphi^M(a)=0$ and such that, given any model $N$ of $T^f$ and any $b\in N$ with $\varphi^N(b)<\delta$, there is $c\in N$ such that $\operatorname{tp}^M(a)=\operatorname{tp}^N(c)$ and $\|b-c\|_2<\epsilon$.  Since $M$ is finitely generic, $f$ is an elementary map, whence $\varphi^{M^\u}(f(a))=0$ and thus $\varphi^M(a_i)<\delta$ for $\u$-almost all $i$.  As before, for these $i$, this guarantees the existence of $b_i\in M$ such that $\|a_i-b_i\|_2<\epsilon$ and such that the map $a\mapsto b_i$ extends to an embedding of $M$ into itself.  Thus, $f$ has an approximate lift.
\end{proof}

\begin{remark}
It is not clear if there is any relationship between the existence of the enforceable factor and the existence of the prime model of $T^f$.
\end{remark}

\section{Revisiting the embeddable situation}

In this section, we show how our techniques from above can yield different proofs that $\R$ has the Jung property and is self-tracially similar.

Recall from the introduction that $\R$ is the enforceable embeddable factor.  Besides the model theory behind building models by games, the two main operator-algebraic ingredients in the proof are:
\begin{itemize}
    \item Being hyperfinite is $\forall\bigvee\exists$-axiomatizable\footnote{Morally speaking, one just axiomatizes the property that any finite tuple is within any positive tolerance of a copy of some matrix algebra.}, whence being hyperfinite is an enforceable property for embeddable factors.
    \item $\R$ is the unique separable hyperfinite factor.
\end{itemize}

Noting that our proof from the previous section that the enforceable factor (should it exist) is self-tracially stable relativizes immediately to the embeddable situation, we obtain the fact that $\R$ is self-tracially stable, \emph{without resorting to the fact that $\R$ has the Jung property}.

Unfortunately, our proof in the previous section that the enforceable factor (should it exist) has the weak Jung property does not necessarily relativize to the embeddable situation as the following seems to be an open question:

\begin{question}
Is the class of embeddable tracial von Neumann algebras closed under HNN extensions?
\end{question}

If the answer to the previous question is positive, then we learn that all e.c. embeddable factors (and thus, in particular, $\R$ itself) have the weak Jung property.  

Nevertheless, we can give a proof that is similar in spirit that does relativize to the embeddable situation.  Indeed, fix an endomorphism $f:\R\to \R$; we show that $f$ is approximately inner.  Let $a$ be a generator of $\R$.  Since $\R$ is a finitely generic embeddable factor\footnote{This follows from being the enforceable factor.}, we have $\tp^{\R}(a)=\tp^{\R}(f(a))$.  Consequently, there is an elementary extension $N$ of $\R$ and an automorphism $\sigma$ of $N$ such that $\sigma(f(a))=a$.  Using that $\R\subseteq N\rtimes_\sigma \mathbb{Z}$ and the class of embeddable factors is closed under crossed products by $\mathbb Z$ (and, more generally, by any amenable group), we have that, given any $\epsilon>0$, there is a unitary $u\in \R$ such that $\|uf(a)u^*-a\|_2<\epsilon$.  Consequently, $f$ is approximately inner. 

Combining these proofs gives a new proof that $\R$ has the Jung property.

We end with the following natural question which, to the best of our knowledge, is open:

\begin{question}
Is $\R$ the unique embeddable factor with the weak Jung property?
\end{question}

As mentioned above, if the class of embeddable tracial von Neumann algebras is closed under HNN extensions, then there are a plethora of embeddable factors with the weak Jung property.


\end{document}